\newcommand{\rg}{$\{2\}$-Roman graph}
\newcommand{\kr}{$\{k\}$-Roman\ }
\newcommand{\gr}{\gamma_{\{R2\}}}
\newcommand*{\abs}[1]{\ensuremath{{\lvert {#1} \rvert}}}
\newtheorem{theorem}{Theorem}[section]
\newtheorem{lemma}[theorem]{Lemma}
\newtheorem{observation}[theorem]{Observation}
\newtheorem{corollary}[theorem]{Corollary}
\newtheorem{definition}{Definition}[section]
\newtheorem{claim}[theorem]{Claim}
\newenvironment{subproof}[1][Proof.]{%
    \begin{proof}[{#1}]%
        }{%
    \end{proof}}
\newcommand{\sun}[1]{S_{#1}}
\newcommand{\cosun}[1]{\overline{S}_{#1}}
\begin{document}
\title{On $\{k\}$-Roman graphs: complexity of recognition and the case of split graphs\thanks{An extended abstract of this work was accepted for the proceedings of the XIII Latin American Algorithms, Graphs, and Optimization Symposium (LAGOS 2025).}\thanks{Due to an update of this work, some of the former results are only available in previous ArXiv versions}}

%% use optional labels to link authors explicitly to addresses:
%% \author[label1,label2]{<author name>}
%% \address[label1]{<address>}
%% \address[label2]{<address>}

\author[1,2]{Kenny Bešter Štorgel}
\author[2,3]{Nina Chiarelli}
\author[4,5]{Lara Fernández\thanks{lara@fceia.unr.edu.ar}$^,$}
\author[2]{J.~Pascal Gollin\thanks{pascal.gollin@famnit.upr.si}$^,$}
\author[2]{Claire Hilaire}
\author[4,5]{Valeria Leoni}
\author[2,3]{Martin Milanič \thanks{martin.milanic@upr.si}$^,$}
\affil[1]{Faculty of Information Studies in Novo Mesto, Slovenia}
\affil[2]{FAMNIT, University of Primorska, Koper, Slovenia}
\affil[3]{IAM, University of Primorska, Koper, Slovenia}
\affil[4]{Universidad Nacional de Rosario, Argentina}
\affil[5]{CONICET, Argentina}

\maketitle
 
\begin{abstract}
For a positive integer $k$, a $\{k\}$-Roman dominating function of a graph $G = (V,E)$ is a function $f\colon V \rightarrow \{0,1,\ldots,k\}$ satisfying $\sum_{u\in N(v)} f(u) \geq k$ for each vertex $v\in V$ with $f (v) = 0$. 
Every graph $G$ satisfies $\gamma_{\{Rk\}}(G) \leq k\gamma(G)$, where $\gamma(G)$ is the domination number of $G$ and $\gamma_{\{Rk\}}(G)$ denotes the $\{k\}$-Roman domination number of $G$, that is, the minimum value of $\sum_{u\in V(G)} f(u)$ over all $\{k\}$-Roman dominating functions of $G$. 
 In this work we study graphs for which the equality is reached, called \emph{$\{k\}$-Roman graphs}.
This extends the concept of $\{k\}$-Roman trees studied by Wang et al.~in 2021 to general graphs.
We prove that for every $k\geq 2$, the problem of recognizing \hbox{$\{k\}$-Roman} graphs is \textsf{NP}-hard, even for split graphs. 
For ${k\geq 3}$, we give an alternative proof by generalizing several known results on domination in middle graphs to the hypergraph setting.
Finally, we characterize the \kr  property within two specific subclasses of split graphs: suns and their complements. 
\end{abstract}

%\begin{keyword} 
%$\{k\}$-Roman domination \sep $\{k\}$-Roman graph \sep \textsf{NP}-hardness \sep split graph \sep middle graph \sep hypergraph

%\MSC 05C69 
% Vertex subsets with special properties (dominating sets, independent sets, cliques, etc.)
%\sep 
%68Q25 % Analysis of algorithms and problem complexity
%\sep
%05C75 
% Structural characterization of families of graphs
%\sep
%05C65 
% Hypergraphs
%\end{keyword}

\section{Introduction}\label{sec:intro}

\begin{sloppypar}

Domination in graphs is a rich and fast developing area of graph theory, with motivations stemming from both practical and theoretical considerations (see, e.g., \cite{Haynes1998}).

For a graph $G=(V,E)$ and a vertex $v\in V$, we denote by $N_G(v)$ the \emph{neighborhood} of $v$ in $G$, that is, the set of all the vertices adjacent to $v$ in $G$, and by $N_G[v]$ its \emph{closed neighborhood}, i.e., the set $N_G(v)\cup\{v\}$.
We may omit the subscript $G$ when the graph is clear from the context. 
A \emph{dominating set} in a graph $G = (V,E)$ is a set $D\subseteq V$ such that $D\cap N[v]\neq \emptyset$ for all $v\in V$.
The \emph{domination number} of $G$, denoted by $\gamma(G)$, is the minimum cardinality of a dominating set in $G$. 

Over the years, many variants of the classical concept of domination were proposed from the neighborhood-sum point of view~\cite{Haynes1998}. 
One of those variants is $\{k\}$-domination, where $k$ is a positive integer.
Given a graph $G=(V,E)$, a function $f\colon V \rightarrow \{0, 1,\ldots, k\}$ is called a \emph{$\{k\}$-dominating function} of $G$ if every $v \in  V$ satisfies $f(N [v]) \geq k$~\cite{Domke1991}\footnote{For a function $f$ defined on $V$ and a set $S\subseteq V$, we denote the \emph{weight} of $S$ under $f$ by $f(S) = \sum_{v\in S} f(v)$. 
The \emph{weight} of $f$ is defined as $f(V)$.}.
In 2016, Chellali et al.~\cite{Chellali2016} introduced $\{2\}$-Roman domination\footnote{The concept of $\{k\}$-Roman domination has been studied in the literature under the name ``Roman $\{k\}$-domination''; however, since $\{k\}$-domination is a concept that is indeed changed and not just made more specific by the adjective Roman, we find it more sensible to have ``$\{k\}$-Roman'' as the adjective.} ---also called Italian domination in later works (see, e.g.,~\cite{Henning2017})--- as a variant of $\{k\}$-domination for $k=2$.
A function $f\colon V \rightarrow \{0, 1, 2\}$ is called a \emph{$\{2\}$-Roman dominating function} of $G$ if  $f(N (v)) \geq 2$ for each  $v \in  V$ with $f(v)=0$.
The minimum weight of a \hbox{$\{2\}$-Roman} dominating function of $G$ is denoted by $\gamma_{\{R2\}}(G)$. 
The $\{2\}$-Roman domination was also studied in 2007 by Bre\v{s}ar and Kraner \v{S}umenjak~\cite{Bresar2007} under the name \emph{weak $\{2\}$-domination}.
Several \textsf{NP}-completeness results as well as polynomial cases of the $\{2\}$-Roman domination problem are presented in~\cite{Chellali2016,Chakradhar2020,Poureidi2020}, and the  most recent ones in~\cite{Fernandez2023}.
\end{sloppypar}

Every graph $G$ satisfies $\gamma(G) \leq \gamma_{\{R2\}}(G) \leq 2 \gamma(G)$ (see, e.g.,~\cite{Chellali2016}).  
A graph $G$ is said to be a \emph{$\{2\}$-Roman graph} (or an \emph{Italian graph}) if $\gamma_{\{R2\}}(G) = 2\gamma(G)$~\cite{Klostermeyer}. 
A characterization of $\{2\}$-Roman trees in terms of four graph operations is  given in~\cite{Henning2017}. 
The $\{2\}$-Roman property of graphs is characterized in~\cite{Ferrari2025} by the existence of a minimum $\{2\}$-Roman dominating function of $G$ that assumes only $0, 2$-values. 

\begin{sloppypar}
Inspired by the work in~\cite{Chellali2016}, the ``Roman'' variant of $\{k\}$-domination for any positive integer $k$ is introduced and studied in 2021 by Wang et al.~\cite{Wang2021} (see also the earlier work of Chang et al.~\cite{Chang2013} where the problem was studied under the name ``weak $\{k\}$-domination'').
A \emph{$\{k\}$-Roman dominating function} of a graph $G = (V,E)$ is a function $f\colon V \rightarrow \{0, 1,\ldots,k\}$ satisfying $f (N(v)) \geq k$ for each vertex $v\in V$ with
$f (v) = 0$. 
The minimum weight of a $\{k\}$-Roman dominating function of $G$ is called the \hbox{\emph{$\{k\}$-Roman domination number}} of $G$ and denoted by $\gamma_{\{Rk\}}(G)$.
When a function $f$ is a $\{k\}$-Roman dominating function of a graph $G$ with weight $\gamma_{\{Rk\}}(G)$, we simply say that $f$ is a \hbox{\emph{$\gamma_{\{Rk\}}$-function}} of $G$. 
The decision problem associated with $\{k\}$-Roman domination is \textsf{NP}-complete, even for bipartite planar graphs, chordal bipartite graphs, and undirected path graphs~\cite{Wang2021}.
\end{sloppypar}

Among the several bounds proved in~\cite{Wang2021} for the $\{k\}$-Roman domination number  is the inequality $\gamma_{\{Rk\}}(G) \leq k\gamma(G)$, valid for every positive integer $k$ and every graph $G$ (see also~\cite{Chang2013}). 
For an integer $k\ge 2$, we say that a graph $G$ is a \emph{$\{k\}$-Roman graph} if $\gamma_{\{Rk\}}(G) = k\gamma(G)$.\footnote{The inequality $k\ge 2$ in the definition is justified by the fact that every graph $G$ satisfies $\gamma_{\{R1\}}(G) = \gamma(G)$.}
In~\cite{Wang2021}, $\{k\}$-Roman trees are characterized in terms of five operations.
Except for this result, however, nothing is known about $\{k\}$-Roman graphs, and it is the aim of this work to partially fill this gap.

Most of our results deal with the class of split graphs.
A graph $G$ is said to be \emph{split} if it admits a \emph{split partition}, that is, a partition $(K,I)$ of the vertex set such that $K$ and $I$ are respectively a clique and an independent set in $G$. 
Note that a split graph may admit more than one split partition (see, e.g.,~\cite[Chapter 6]{MR2063679}).

As our main result, we show in Section \ref{sec:recognition3SAT} that for every $k\geq 2$, the problem of recognizing $\{k\}$-Roman graphs is \textsf{NP}-hard, even when restricted to the class of split graphs.
The case $k = 2$ is similar to a result of Wang et al.~\cite{DBLP:conf/aaim/WangYLX24}, who showed \textsf{NP}-hardness of the problem of recognizing graphs achieving equality in the inequality $\gamma_{\{R2\}}(G)\le \gamma_2(G)$, where $\gamma_2(G)$ denotes the \emph{$2$-domination number} of a graph $G$, that is, the smallest cardinality of a set $S\subseteq V(G)$ such that every vertex in $V(G)\setminus S$ has at least two neighbors in~$S$.
Further \textsf{NP}-hardness results of similar flavor were obtained by Alvarado, Dantas, and Rautenbach~\cite{MR3336112,MR3522148,MR3684117,MR4222713}, Boruzanl\i\ Ekinci and Bujt\'as~\cite{MR4686207}, and Milanič~\cite{MR2928447}. 

In \zcref{sec:recognition} we generalize several known results on domination and $\{2\}$-Roman domination in middle graphs to the hypergraph setting, by connecting the existence of perfect matchings in $k$-uniform hypergraphs with the property of being a $\{k\}$-Roman graph.
As a by-product, this gives an alternative proof of the \textsf{NP}-hardness of recognizing $\{k\}$-Roman split graphs for $k\geq 3$.

In the last section, we use some of the results from  \zcref{sec:recognition} to characterize the \kr   property within two specific subclasses of split graphs: suns and their complements.
Furthermore, some of the proofs in that section rely on the following general observation of independent interest relating $\{k\}$-Roman graphs for different values of $k$: classes of $\{k\}$-Roman graphs, with $k\ge 2$, form a nonincreasing sequence of classes, in the sense that every $\{k+1\}$-Roman graph is also $\{k\}$-Roman.

Some of the results presented here appeared in an extended abstract published in the proceedings of the XIII Latin American Algorithms, Graphs, and Optimization Symposium (LAGOS 2025)~\cite{STORGEL2025325}.

\section{Recognizing $\{k\}$-Roman split graphs}
\label{sec:recognition3SAT}

In this section, we show that for all $k\ge 2$, the problem of recognizing \hbox{$\{k\}$-Roman} graphs is \textsf{NP}-hard, even when restricted to the class of split graphs.
We adapt the approach of Wang et al.~\cite{DBLP:conf/aaim/WangYLX24}, who considered sharpness of the inequality $\gamma_{\{R2\}}(G) \leq \gamma_2(G)$, to the inequality ${\gamma_{\{Rk\}}(G) \leq k\gamma(G)}$, and to the class of split graphs.

We use a reduction from the \textsc{3-Satisfiability} ($3$-SAT) problem, which we now recall.
The input to the problem is a pair $(X,C)$ where $X = \{x_1,\ldots, x_n\}$ is a set of $n$ Boolean variables and $C = \{c_1,\ldots, c_m\}$ is a set of $m$ clauses over $X$, each formed as a disjunction of exactly three literals over $X$, say $c_i = \ell^i_{1}\vee \ell^i_{2}\vee \ell^i_{3}$, where $\ell^i_{j}\in \{x_1,\ldots, x_n,\neg x_1,\ldots, \neg x_n\}$ for all $i \in \{1,\ldots, m\}$ and $j\in \{1,2,3\}$.
The task is to determine whether the conjunction of all the clauses is satisfiable, that is, whether it admits a satisfying assignment (an assignment of true/false values to the variables in $X$ such that all clauses evaluate to true).
The $3$-SAT problem is \textsf{NP}-complete (see~\cite{MR378476}). 

Our reduction relies on the notion of a \emph{complete split graph}, that is, a split graph that admits a split partition $(K,I)$ such that each vertex of $K$ is adjacent to every vertex of $I$.

\begin{theorem}\label{thm:k-Roman-NP-hard-via-3SAT}
For every fixed $k\ge 2$, the problem of recognizing $\{k\}$-Roman graphs is \textsf{NP}-hard, even when restricted to the class of split graphs.
\end{theorem}

\begin{proof}
Let~$k\ge 2$.
We prove \textsf{NP}-hardness by a reduction from $3$-SAT.
Let an instance of $3$-SAT with variables~$x_1, \dots, x_n$ and clauses~$c_1, \dots, c_m$ be given, and let ${\phi \coloneqq \bigwedge_{i = 1}^mc_i}$ be the corresponding Boolean formula.
For each~${i \in [n]}$\footnote{$[n]$ indicates the set of integers between $1$ and $n$, i.e., the set $\{1, \ldots, n\}$.}, we define a complete split graph~$H_i$ with vertex set~$\{v_i^\top, v_i^\bot, v_i^1, \dots, v_i^k\}$, where~$\{v_i^\top, v_i^\bot\}$ is a clique and~$\{v_i^1, \dots, v_i^k\}$ is an independent set forming a split partition of $H_i$. 
Now we construct a graph~$G$, as follows.
We start with the disjoint union of all these graphs~$H_i$ together with, for each~${j \in [m]}$, a vertex~$u_j$ (representing the clause~$c_j$) with an edge from~$u_j$ to~$v_i^\top$ if~$x_i$ appears in the clause~$c_j$ and an edge from~$u_j$ to~$v_i^\bot$ if~$\neg x_i$ appears in the clause~$c_j$. 
Lastly, we add edges so that the set~$\bigcup_{i = 1}^n\{v_i^\top, v_i^\bot\}$ becomes a clique. 
In particular, since the remaining vertices form an independent set, $G$ is a split graph. 

\begin{claim}\label{cl:3satReduction1}
    $\gamma_{\{Rk\}}(G) = kn$.
\end{claim}

\begin{subproof}
    Let~$f$ be any $\{k\}$-Roman dominating function of~$G$. 
    We first show that for every~${i \in [n]}$, ${f(V(H_i)) \geq k}$, which then implies that~$\gamma_{\{Rk\}}(G)\geq kn$. 
    To see this, notice that either each of~$v_i^1,\dots,v_i^k$ has strictly positive weight, or at least one of~$v_i^1,\dots,v_i^k$ has weight~$0$, in which case~$f(v_i^\top) + f(v_i^\bot) \geq k$.

    On the other hand, take the function~$f$ with ${f(v_i^\top) = \lceil k/2\rceil}$, ${f(v_i^\bot)=\lfloor k/2\rfloor}$ for each $i\in [n]$, and $0$ everywhere else. 
    Now each of~$v_i^1, \dots, v_i^k$ has weight~$k$ in its neighborhood, and each~$u_j$ has weight at least~$3 \cdot \lfloor k/2\rfloor \geq k$ in its neighborhood, so~$f$ is indeed a $\{k\}$-Roman dominating function. 
\end{subproof}

% Since for every graph~$G$, we have that $\gamma_{\{Rk\}}(G) \leq k\gamma(G)$, it follows that
% \begin{equation}
%     kn = \gamma_{\{Rk\}}(G)\le k\gamma(G) \,.
% \end{equation}
% In particular, we infer that $\gamma(G)\ge n$.

Moreover, the domination number of this graph satisfy the following.

\begin{claim}\label{c2:3satReduction1}
    $\gamma(G) = n$ if and only if the formula $\phi$ is satisfiable.
\end{claim}

\begin{subproof}
    Let~$D$ be a dominating set of size~$n$ in $G$. 
    For every~${i \in [n]}$, at least one vertex of~$H_i$ has to be in $D$, since otherwise $v_i^1$ would not be dominated. 
    Consequently, exactly one of~$v_i^\top, v_i^\bot$ is in~$D$ for each $i\in [n]$.
    Moreover, there are no other vertices in $D$.
    We define a satisfying assignment by setting~$x_i$ to be true if~${v_i^\top \in D}$ and false if~${v_i^\bot \in D}$. 
    Since every vertex $u_j$ has a neighbor in $D$, this indeed is an assignment for which all clauses~$c_1, \dots, c_m$ evaluate to true. 

    On the other hand, suppose that~$\phi$ is satisfiable by an assignment of truth values to the variables~$x_i$.
    We define~$D$ to be the set of all~$v_i^\top$ such that~$x_i$ is assigned true, and all~$v_i^\bot$ such that~$x_i$ is assigned false. 
    Now, $D$ is a set of size~$n$ which is dominating in $G$, since for each $i\in [n]$, each vertex in $V(H_i)\setminus D$ is adjacent to the vertex in $\{v_i^\top,v_i^\bot\}\cap D$, and for each $j\in [m]$, each vertex~$u_j$ is adjacent to at least one vertex in~$D$ by the definition of~$D$. 
\end{subproof}

By Claim \ref{cl:3satReduction1}, $\gamma_{\{Rk\}}(G) = kn$, and since by Claim \ref{c2:3satReduction1}, $kn= k\gamma(G) $ if and only if the formula is satisfiable, this completes the proof.
\end{proof}

\section{An alternative proof for ${k\ge 3}$ via perfect matchings in hypergraphs}
\label{sec:recognition}

A \emph{hypergraph} $H = (V, E)$ is a finite set of vertices $V$ together with a family $E$ of distinct, nonempty subsets of $V$, called edges. For $k \geq 2$, a \emph{$k$-uniform} hypergraph is one in which each edge has cardinality~$k$.
Hence, graphs are 2-uniform hypergraphs. 
A \emph{perfect matching} in a hypergraph $H= (V,E)$ is a set of pairwise disjoint hyperedges with union $V$.
A vertex $v$ in a hypergraph $H$ is said be \emph{isolated} if it belongs to no edge.
Given a hypergraph $H = (V,E)$ without isolated vertices, an \emph{edge cover} in $H$ is a set $E'\subseteq E$ of hyperedges such that every vertex of $H$ belongs to a hyperedge in $E'$  (see, e.g.,~\cite{Schrijver2003}).
The \emph{edge cover number} of $H$, denoted $\rho(H)$, is the minimum cardinality of an edge cover in $H$.
Restricting the above definitions to $2$-uniform hypergraphs, we obtain the usual definitions of perfect matchings, edge covers, and the edge cover number for graphs.

Given a graph $G$, the \emph{middle graph} of $G$, denoted $M(G)$, is the graph obtained from $G$ by subdividing every edge and joining two of the subdivided vertices by an edge if they share a neighbor (see~\cite{Hamada76}).
In this section we generalize several 
% some
known results on domination in middle graphs~\cite{sym12050751,MR4740902,MR4499617}
% middle graphs
to the hypergraph setting, connecting the existence of perfect matchings in hypergraphs with the property of being a $\{k\}$-Roman graph, for certain graphs related to hypergraphs.
This connection gives an alternative proof that the problem of recognizing $\{k\}$-Roman graphs is \textsf{NP}-hard for all $k\ge 3$, even when restricted to the class of split graphs.

\begin{sloppypar}
Kazemnejad et al.~\cite[Theorem 2.10]{MR4499617} proved that for every graph $G$ without isolated vertices, $\gamma(M(G)) = \rho(G)$.
Note that this implies that if $G$ is an $n$-vertex graph without isolated vertices, then 
$\lceil \frac{n}{2}\rceil\le \gamma(M(G))\le n-1$.
The former inequality was also proved by Basilio et al.~\cite{sym12050751}, who showed in addition that $\gamma(M(G)) = n/2$ if and only if $G$ has a perfect matching.
Kim~\cite{MR4740902} showed that if $G$ is an $n$-vertex graph, then \hbox{$\gr(M(G)) = n$}. 
These last two results imply the following result.
\end{sloppypar}

\begin{theorem}\label{middle-Italian}
Let $G$ be a graph with no isolated vertices.
Then, $M(G)$ is a $\{2\}$-Roman graph if and only if $G$ has a perfect matching.
\end{theorem}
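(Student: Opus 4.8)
The plan is to derive this directly from the two cited facts: Basilio et al.'s result that $\gamma(M(G)) = n/2$ exactly when $G$ has a perfect matching (and otherwise $\gamma(M(G)) \ge \lceil n/2 \rceil$, which for $n$ odd already gives $\gamma(M(G)) \ge (n+1)/2$), and Kim's result that $\gamma_{\{R2\}}(M(G)) = n$ where $n = |V(G)|$. Recall that $M(G)$ being a $\{2\}$-Roman graph means $\gamma_{\{R2\}}(M(G)) = 2\gamma(M(G))$.

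First I would handle the backward direction. If $G$ has a perfect matching, then in particular $n$ is even and $\gamma(M(G)) = n/2$, so $2\gamma(M(G)) = n = \gamma_{\{R2\}}(M(G))$, hence $M(G)$ is a $\{2\}$-Roman graph. This is immediate.

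Next the forward direction, which I would prove by contrapositive. Suppose $G$ has no perfect matching. If $n$ is odd, then no matching can saturate all vertices, but more to the point $\gamma(M(G)) \ge \lceil n/2 \rceil = (n+1)/2$ by the Kazemnejad/Basilio lower bound, so $2\gamma(M(G)) \ge n+1 > n = \gamma_{\{R2\}}(M(G))$, and since always $\gamma_{\{R2\}} \le 2\gamma$ we get strict inequality, so $M(G)$ is not $\{2\}$-Roman. If $n$ is even, then by Basilio et al.\ the absence of a perfect matching forces $\gamma(M(G)) > n/2$, i.e.\ $\gamma(M(G)) \ge n/2 + 1$, whence $2\gamma(M(G)) \ge n+2 > n = \gamma_{\{R2\}}(M(G))$, again contradicting the $\{2\}$-Roman equality. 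In both cases $M(G)$ fails to be $\{2\}$-Roman, completing the contrapositive.

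There is essentially no obstacle here: the theorem is a clean corollary of the quoted results, and the only mild care needed is noting that when $G$ has no isolated vertices neither does $M(G)$ (so that $\gamma_{\{R2\}}(M(G))$ and the bound $\gamma_{\{R2\}} \le 2\gamma$ apply) and invoking the odd/even case split so that the strict inequality $\gamma(M(G)) > n/2$ is interpreted correctly as $\gamma(M(G)) \ge \lceil n/2 \rceil + \mathbf{1}[n \text{ even}]$ — equivalently, simply observe that in either parity case the hypothesis of no perfect matching yields $2\gamma(M(G)) \ge n+1$. If one wanted a uniform one-line argument, it suffices to note that $\gamma(M(G)) = n/2$ can hold only when $n$ is even, and then Basilio et al.\ give the equivalence with perfect matchings directly; the odd case is subsumed because $n/2$ is not an integer.
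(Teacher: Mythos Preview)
Your proof is correct and follows exactly the approach the paper intends: the paper does not even spell out a proof, merely stating that the theorem is implied by the two cited results of Basilio et al.\ and Kim. Your parity case split is unnecessary (as you yourself note at the end, the one-line argument $\gamma_{\{R2\}}(M(G)) = n = 2\gamma(M(G)) \iff \gamma(M(G)) = n/2 \iff G$ has a perfect matching suffices), but the content is the same.
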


We significantly generalize \zcref{middle-Italian} in \zcref{thm:k-Roman}.  
Let us also mention that together with known results on middle graphs~\cite{SkowronskySyslo1984} and matchings~\cite{Edomnds1965}, \zcref{middle-Italian} implies the following.

\begin{corollary}
The problem of recognizing $\{2\}$-Roman graphs is solvable in polynomial time when restricted to middle graphs.
\end{corollary}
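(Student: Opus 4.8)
The plan is to turn the recognition of the $\{2\}$-Roman property among middle graphs into a perfect matching test, which is polynomial by Edmonds' algorithm~\cite{Edomnds1965}. Suppose we are given a graph $H$ that is a middle graph. First I would run the polynomial-time algorithm of~\cite{SkowronskySyslo1984} to confirm that $H$ is a middle graph and to reconstruct a graph $G$ with $M(G) = H$. This $G$ need not be unique, but by \Cref{middle-Italian} the $\{2\}$-Roman property of $H$ is equivalent to $G$ having a perfect matching whenever $G$ has no isolated vertices, so the output of the algorithm does not depend on which preimage is returned.

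Next I would separate out the isolated vertices of $G$. A vertex of degree $0$ in $H = M(G)$ must be an isolated vertex of $G$, since every subdivision vertex of $M(G)$ has degree at least $2$; thus $H$ decomposes canonically as the disjoint union of $M(G')$, where $G'$ is obtained from $G$ by deleting its isolated vertices, together with some number $t \ge 0$ of isolated vertices. Both $\gamma$ and $\gamma_{\{R2\}}$ are additive over connected components, and since $\gamma_{\{R2\}} \le 2\gamma$ holds for every graph, $H$ is $\{2\}$-Roman if and only if each of its components is. An isolated vertex $v$ satisfies $\gamma(\{v\}) = 1$ but $\gamma_{\{R2\}}(\{v\}) = 1 < 2$, so if $t \ge 1$ the algorithm rejects. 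If $t = 0$, then $G$ has no isolated vertices and \Cref{middle-Italian} applies directly: run Edmonds' maximum matching algorithm on $G$ and accept if and only if $G$ admits a perfect matching.

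Each step --- recognizing/reconstructing from the middle graph, finding the degree-$0$ vertices, and computing a maximum matching --- runs in polynomial time, so the whole procedure does, and correctness follows from \Cref{middle-Italian} together with the disjoint-union reduction above. The one place that requires care is the reconstruction step: extracting $G$ from $H$ (and verifying membership in the promised class) is not completely trivial and is exactly where the structural results on middle graphs from~\cite{SkowronskySyslo1984} are needed; once $G$ is available, the rest is a routine matching computation. A minor sanity check is that the finitely many small graphs admitting more than one middle-graph preimage cannot lead to inconsistent answers, which is automatic from \Cref{middle-Italian} as noted above.
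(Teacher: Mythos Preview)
Your argument is correct and follows the same route as the paper: reconstruct $G$ from $M(G)$ via~\cite{SkowronskySyslo1984}, then test for a perfect matching via~\cite{Edomnds1965} and invoke \Cref{middle-Italian}. Your additional treatment of isolated vertices and of the possible non-uniqueness of the preimage $G$ is more careful than the paper's terse proof (which silently assumes these cause no trouble), but it does not change the approach.
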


\begin{proof}
Skowro\'nska and Sys\l{}o proved that middle graphs can be recognized in linear time (see~\cite{SkowronskySyslo1984}) and in the case of a positive answer, a graph $G$ such that the input graph equals $M(G)$ can be computed in linear time.
Since the existence of a perfect matching in a graph can be tested in polynomial time (see~\cite{Edomnds1965}), the result follows from \zcref{middle-Italian}.
\end{proof}

Our generalization of \zcref{middle-Italian} is based on the following definition, which unifies several concepts from the literature.

\begin{definition}\label{def-compatible-strongly-compatible}
Given a hypergraph $H=(V,E)$, a graph $G$ with vertex set $V\cup E$ is said to be \emph{compatible with $H$} if $V$ is an independent set in $G$ and, for every vertex $v\in V$ and hyperedge $e\in E$, the pair $\{v,e\}$ is an edge of $G$ if and only if $v\in e$. If, in addition, any two hyperedges $e_1,e_2\in E$ such that $e_1\cap e_2\neq \emptyset$ are adjacent in $G$, then $G$ is said to be \emph{strongly compatible with $H$}.
Note that there may be other edges between vertices in $E$.
\end{definition}

\zcref{def-compatible-strongly-compatible} captures the following concepts from the literature.
Suppose that $G$ is a graph compatible with a given hypergraph $H=(V,E)$.
Then:
\begin{itemize}
    \item If $G$ has the fewest possible number of edges, then $E$ is an independent set in $G$ and $G$ is precisely the \emph{bipartite incidence graph} of $H$\footnote{For a hypergraph $H=(V,E)$, the \emph{bipartite incidence graph of $H$} is the graph with vertex set $V\cup E$ and edges of the form $\{v,e\}$ with $v\in V$,  $e\in E$, and $v\in e$. 
    Clearly, this is a bipartite graph with bipartition $\{V,E\}$.} (see, e.g.,~\cite{KLMN2014}).
    \item If $G$ has the largest possible number of edges, then $G$ is the split graph obtained from the bipartite incidence graph of $H$ by turning $E$ into a clique.
    This construction was used by Bertossi in the proof that the dominating set problem is \textsf{NP}-complete in the class of split graphs (see~\cite{Bertossi1984}) as well as more recently by Boros et al.~(see~\cite{BGM2020}).
    \item If $H$ is $2$-uniform and $G$ is the unique minimal (with respect to the subgraph relation) graph that is strongly compatible with $H$, then $G$ is precisely the middle graph of~$H$.
\end{itemize}

By the last observation above, the following lemma generalizes the aforementioned result of Kim~\cite{MR4740902} (which corresponds to the case $k = 2$).

\begin{lemma}\label{lem:gamma_RkG}
Let $k\ge 1$, let $H=(V,E)$ be a $k$-uniform hypergraph, and let $G$ be a graph compatible with $H$.
Then $\gamma_{\{Rk\}}(G) = |V|$.
\end{lemma}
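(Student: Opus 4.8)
The plan is to prove the two inequalities $\gamma_{\{Rk\}}(G) \le |V|$ and $\gamma_{\{Rk\}}(G) \ge |V|$ separately. The upper bound is immediate: define $f\colon V(G)\to\{0,1,\dots,k\}$ by setting $f(v)=1$ for every $v\in V$ and $f(e)=0$ for every $e\in E$. This has weight $|V|$, so it remains to check it is a $\{k\}$-Roman dominating function. The only vertices assigned $0$ are the hyperedges $e\in E$; since $H$ is $k$-uniform, $e$ has exactly $k$ elements, and by compatibility each such element $v\in e$ is a neighbor of $e$ in $G$ with $f(v)=1$, so $f(N_G(e))\ge k$. (If $H$ has isolated vertices, i.e. $|V|$ elements lying in no hyperedge, these get value $1$ and impose no constraint; the bound still holds. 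If some $v\in V$ has $f(v)=1$ there is nothing to check.) Hence $\gamma_{\{Rk\}}(G)\le |V|$.

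For the lower bound, I would take an arbitrary $\{k\}$-Roman dominating function $f$ of $G$ and show $f(V(G))\ge |V|$. The natural idea is to account, for each vertex $v\in V$, for ``one unit'' of weight that $f$ must place on $N_G[v]$, in a way that the regions charged to distinct vertices of $V$ are pairwise disjoint. Since $V$ is independent in $G$ and the only $G$-neighbors of a vertex $v\in V$ are hyperedges $e\in E$ with $v\in e$, the relevant quantity is $f(v) + \sum_{e\ni v} f(e)$. If $f(v)\ge 1$ we can simply charge the unit to $v$ itself. The difficulty is when $f(v)=0$: then the $\{k\}$-Roman condition gives $f(N_G(v)) = \sum_{e\ni v} f(e) \ge k$, i.e. the hyperedges through $v$ carry total weight at least $k$, but these hyperedges may be shared with up to $k-1$ other elements of $V$, so a single unit of edge-weight could be claimed by up to $k$ different vertices of $V$ — and this is exactly why we need a factor-$k$ surplus there.

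The clean way to make this rigorous is a global double-counting argument rather than a vertex-by-vertex charging. Partition $V$ into $V_0 = \{v\in V : f(v)=0\}$ and $V_{\ge 1}=V\setminus V_0$. Then
\[
\sum_{v\in V_0} f(N_G(v)) \;=\; \sum_{v\in V_0}\ \sum_{\substack{e\in E\\ v\in e}} f(e) \;=\; \sum_{e\in E} |e\cap V_0|\, f(e) \;\le\; \sum_{e\in E} k\, f(e) \;=\; k\, f(E),
\]
using $k$-uniformity ($|e\cap V_0|\le |e| = k$) in the inequality. On the other hand, the $\{k\}$-Roman condition gives $f(N_G(v))\ge k$ for each $v\in V_0$, so the left-hand side is at least $k|V_0|$. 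Combining, $k|V_0|\le k f(E)$, hence $f(E)\ge |V_0|$. Therefore
\[
f(V(G)) \;=\; f(V_{\ge 1}) + f(V_0) + f(E) \;\ge\; |V_{\ge 1}| + 0 + |V_0| \;=\; |V|,
\]
where $f(V_{\ge 1})\ge |V_{\ge 1}|$ since each vertex there has value at least $1$ and $f(V_0)=0$. This gives $\gamma_{\{Rk\}}(G)\ge |V|$, completing the proof.

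The main obstacle is conceptual rather than technical: realizing that a local charging scheme does not work (because a hyperedge of weight $k$ legitimately ``covers'' up to $k$ vertices of $V_0$ at once, so one cannot assign its weight to a single vertex), and replacing it by the aggregate inequality above, where the $k$-uniformity is used precisely to cancel the factor $k$ coming from the $\{k\}$-Roman threshold. Everything else is a routine rearrangement of sums. One should also note that the argument uses only that $G$ is compatible with $H$ — the presence of extra edges among the hyperedge-vertices of $E$ is harmless, since it only enlarges neighborhoods of vertices in $E$ (not in $V$) and the count above never invoked the $\{k\}$-Roman condition at vertices of $E$.
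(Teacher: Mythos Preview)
Your proof is correct and follows essentially the same approach as the paper: the upper bound via the all-ones function on $V$, and the lower bound via the double-counting inequality $k|V_0|\le \sum_{v\in V_0}\sum_{e\ni v}f(e)\le k\,f(E)$, which is exactly what the paper obtains (they phrase the same sum as the total edge-weight of an auxiliary bipartite graph $F$ on parts $V_0$ and $E$ with $w(\{v,e\})=f(e)$). Your presentation is in fact slightly more direct, but the argument is the same.
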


\begin{proof}
Since $H$ is $k$-uniform, assigning $1$ to each vertex in $V$ and $0$ to each vertex in $E$ yields a $\{k\}$-Roman dominating function in $G$.
Its total weight is $|V|$, hence, $\gamma_{\{Rk\}}(G) \le |V|$.

To prove the inequality $\gamma_{\{Rk\}}(G) \ge |V|$, consider an arbitrary $\{k\}$-Roman dominating function $f$ of $G$.
Let $V_0\subseteq V$ be the set of vertices $v\in V$ such that $f(v) = 0$ and let $F$ be the bipartite subgraph of $G$ with parts $V_0$ and $E$ such that for all $v\in V_0$ and $e\in E$, the vertices $v$ and $e$ are adjacent in $F$ if and only if they are adjacent in $G$.
Consider the following weight function on the edges of $F$: for each $v\in V_0$ and $e\in E$ such that $v\in e$, let $w(\{v,e\}) = f(e)$.
Let $W$ denote the sum of all the  edge weights of $F$.
Since $f$ is a $\{k\}$-Roman dominating function of $G$, for each vertex $v\in V_0$, the sum of the weights  of the edges of $F$ incident with $v$ is at least $k$.
Hence, $W\ge k|V_0|$.
On the other hand, since $H$ is $k$-uniform, for each $e\in E$, the value $f(e)$ appears on at most $k$ edges incident with $e$.
Therefore, $W\le k\cdot \left(\sum_{e\in E}f(e)\right)$. 
It follows that $\sum_{e\in E}f(e)\ge |V_0|$.
Since each vertex in $V\setminus V_0$ gets an $f$-weight of at least $1$, we obtain $\sum_{v\in V}f(v)\ge |V\setminus V_0|$.
Consequently, 
\[\sum_{x\in V(G)}f(x) = \sum_{v\in V}f(v)  + \sum_{e\in E}f(e) \ge  |V\setminus V_0| +|V_0| = |V|\,.\] 
Since $f$ was arbitrary, this shows that  $\gamma_{\{Rk\}}(G) \ge |V|$.
\end{proof}

Our next lemma generalizes the aforementioned result of Kazemnejad et al.~\cite[Theorem 2.10]{MR4499617} as well as the analogous result for the case when $G$ is a split graph (see the proof of~\cite[Theorem 1]{Bertossi1984}), corresponding to the minimum and maximum number of edges in $G$, respectively.

\begin{lemma}\label{lem:edge-cover}
Let $H=(V,E)$ be a hypergraph without isolated vertices and let $G$ be a graph strongly compatible with $H$.
Then $\gamma(G) = \rho(H)$.
\end{lemma}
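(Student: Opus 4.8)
The plan is to prove the two inequalities $\gamma(G) \le \rho(H)$ and $\rho(H) \le \gamma(G)$ separately, exploiting the defining properties of strong compatibility.

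For the easy direction, $\gamma(G) \le \rho(H)$, I would take a minimum edge cover $E' \subseteq E$ of $H$ and argue that $E'$, viewed as a set of vertices of $G$, is a dominating set of $G$. Indeed, every vertex $v \in V$ lies in some hyperedge $e \in E'$, and since $G$ is compatible with $H$ we have $v$ adjacent to $e$ in $G$; and every vertex $e \in E \setminus E'$ shares at least one element with some $e' \in E'$ (pick any vertex of $e$, which is nonempty since $H$ has no isolated vertices, hence lies in $E$ proper; actually one must be slightly careful: $e$ could be disjoint from every member of $E'$ only if its vertices are covered by other edges — but since $E'$ is an edge cover, the vertices of $e$ are covered, so $e$ meets some $e' \in E'$), and strong compatibility then forces $e$ adjacent to $e'$. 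Hence $E'$ dominates all of $V \cup E$, giving $\gamma(G) \le |E'| = \rho(H)$.

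For the reverse direction, $\rho(H) \le \gamma(G)$, I would start from a minimum dominating set $D$ of $G$ and convert it into an edge cover of $H$ of size at most $|D|$. The natural move is: for each $v \in D \cap V$, since $H$ has no isolated vertices pick a hyperedge $e_v \in E$ with $v \in e_v$, and replace $v$ by $e_v$; for each $e \in D \cap E$ keep it. This yields a set $E' \subseteq E$ with $|E'| \le |D|$. I then need to check $E'$ covers every vertex of $H$. Fix $u \in V$. Since $D$ dominates $u$ in $G$ and $V$ is independent, $u$ is dominated either by itself (so $u \in D \cap V$, hence $u \in e_u \in E'$) or by some $e \in D \cap E$ with $u$ adjacent to $e$ in $G$, which by compatibility means $u \in e$, and $e \in E'$. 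Either way $u$ is covered, so $E'$ is an edge cover and $\rho(H) \le |E'| \le |D| = \gamma(G)$.

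I do not expect a serious obstacle here; the argument is essentially bookkeeping with the definitions, and the role of "strongly compatible" (as opposed to merely "compatible") is confined to the first inequality, where one needs intersecting hyperedges to be adjacent so that a minimum edge cover dominates the $E$-side of $G$. The one point requiring a little care is the claim, in the direction $\gamma(G)\le\rho(H)$, that a member $e\in E\setminus E'$ actually intersects some member of $E'$; this follows because $E'$ is an edge cover and $e$ is nonempty, so any fixed vertex of $e$ lies in some $e'\in E'$. The "no isolated vertices" hypothesis is used both to guarantee that an edge cover exists at all and to allow the substitution $v \mapsto e_v$ in the second direction.
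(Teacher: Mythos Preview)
Your proof is correct and follows essentially the same approach as the paper: a minimum edge cover of $H$ is shown to be a dominating set of $G$ (using strong compatibility to dominate the $E$-side), and a minimum dominating set of $G$ is converted to an edge cover of $H$ by replacing each $v\in D\cap V$ with a hyperedge containing it. One small difference: for the inequality $\rho(H)\le\gamma(G)$ the paper first argues, via the observation that $N_G(v)$ is a clique (using strong compatibility), that some minimum dominating set lies entirely in $E$, whereas you bypass this and directly verify that the replaced set is an edge cover---so your second direction actually uses only compatibility, confirming your remark that strong compatibility is needed solely for $\gamma(G)\le\rho(H)$.
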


\begin{proof}
First, we show that $\gamma(G)\le \rho(H)$.
Let $E'$ be a minimum edge cover in $H$.
It suffices to show that $E'$ is a dominating set in $G$.
This is easily verified:
First, every vertex $v\in V$ belongs to some hyperedge $e\in E'$, hence, $v$ is adjacent to $e$ in $G$.
Second, for every hyperedge $e\in E\setminus E'$, we can arbitrarily choose a vertex $v\in e$ and a hyperedge $e'\in E'$ such that $v\in e'$ to obtain an element $e'\in E'$ adjacent to $e$ in $G$ (adjacency holds since $v\in e\cap e'$ and $G$ is strongly compatible with $H$).
Hence, $E'$ is a dominating set in $G$, as claimed.

Next, we show that $\rho(H) \le \gamma(G)$.
Note that for every vertex $v\in V$, the neighborhood of $v$ in $G$ is a nonempty clique in $G$ (this is because $H$ is strongly compatible with $G$).
Hence, if $D$ is a dominating set in $G$ such that $v\in D\cap V$, then we can choose any neighbor $e$ of $v$ in $G$ and replace $v$ with $e$ in $D$ to obtain a dominating set $D'$ in $G$ such that $|D'|\le |D|$.
It follows that any minimum dominating set $D$ in $G$ that minimizes $|D\cap V|$ satisfies $D\subseteq E$. 
Let $D$ be such a minimum dominating set in $G$. 
Then, every vertex $v\in V$ has a neighbor in $D$.
Consequently, $D$ is an edge cover in $H$, implying $\rho(H) \le \gamma(G)$.
\end{proof}

The case $k = 2$ of our next lemma implies the aforementioned result of Basilio et al.~\cite{sym12050751} stating that if $G$ is an $n$-vertex graph without isolated vertices, then $\gamma(M(G)) = n/2$ if and only if $G$ has a perfect matching.

\begin{lemma}\label{lem:perfect-matching}
Let $k\ge 1$, let $H=(V,E)$ be a $k$-uniform hypergraph without isolated vertices, and let $G$ be a graph strongly compatible with $H$.
Then the following statements are equivalent.
\begin{enumerate}
    \item\label{item1-pm} $H$ has a perfect matching.
    \item\label{item3-eq} $\gamma(G) = |V|/k$.
    \item\label{item2-ub} $\gamma(G) \le |V|/k$.
\end{enumerate}
\end{lemma}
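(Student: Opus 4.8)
The plan is to prove the three implications $\ref{item1-pm}\Rightarrow\ref{item3-eq}\Rightarrow\ref{item2-ub}\Rightarrow\ref{item1-pm}$, leaning on \Cref{lem:edge-cover} to translate everything about $\gamma(G)$ into statements about the edge cover number $\rho(H)$. Since $G$ is strongly compatible with $H$, \Cref{lem:edge-cover} gives $\gamma(G)=\rho(H)$, so it suffices to prove the equivalence of: ($H$ has a perfect matching), ($\rho(H)=|V|/k$), and ($\rho(H)\le |V|/k$). The implication $\ref{item3-eq}\Rightarrow\ref{item2-ub}$ is trivial.

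For $\ref{item1-pm}\Rightarrow\ref{item3-eq}$: if $M\subseteq E$ is a perfect matching of $H$, then $M$ is in particular an edge cover, and since its hyperedges are pairwise disjoint and $k$-uniform with union $V$, we get $|M|=|V|/k$, hence $\rho(H)\le |V|/k$. For the reverse inequality, note that any edge cover $E'$ of $H$ satisfies $|V|=\bigl|\bigcup_{e\in E'}e\bigr|\le \sum_{e\in E'}|e|=k|E'|$ (using $k$-uniformity), so $\rho(H)\ge |V|/k$; combining gives $\rho(H)=|V|/k$.

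For $\ref{item2-ub}\Rightarrow\ref{item1-pm}$: suppose $\rho(H)\le |V|/k$, and let $E'$ be a minimum edge cover, so $|E'|=\rho(H)\le |V|/k$. On the other hand, since $E'$ covers all of $V$ and $H$ is $k$-uniform, $|V|\le\sum_{e\in E'}|e|=k|E'|\le |V|$, forcing equality throughout. Equality in $|V|\le\sum_{e\in E'}|e|$ (which we may reinterpret as $\bigl|\bigcup_{e\in E'}e\bigr|\le\sum_{e\in E'}|e|$, both equal to $|V|$ here since $E'$ is a cover) means the hyperedges of $E'$ are pairwise disjoint; together with the fact that their union is $V$, this shows $E'$ is a perfect matching of $H$. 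The only place one must be a little careful is to keep the counting inequality and the covering hypothesis aligned: from $E'$ being a cover we have $\bigcup_{e\in E'}e=V$, and the chain $|V|=\bigl|\bigcup_{e\in E'}e\bigr|\le\sum_{e\in E'}|e|=k|E'|\le|V|$ does the job. I do not anticipate a genuine obstacle here; the main point is simply to route the argument through \Cref{lem:edge-cover} and then use that for $k$-uniform hypergraphs an edge cover of size $|V|/k$ can only be achieved by a perfect matching, by a pigeonhole/double-counting argument on $\sum_{e\in E'}|e|$.
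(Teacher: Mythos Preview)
Your proof is correct and follows essentially the same approach as the paper: invoke \Cref{lem:edge-cover} to reduce to statements about $\rho(H)$, and then use the $k$-uniformity counting $|V|\le k|E'|$ for any edge cover $E'$. If anything, your version is slightly tidier, since for the lower bound $\gamma(G)\ge |V|/k$ in $\ref{item1-pm}\Rightarrow\ref{item3-eq}$ the paper re-argues directly with dominating sets (effectively repeating part of the proof of \Cref{lem:edge-cover}) rather than just applying the counting inequality to edge covers as you do.
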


\begin{proof}
First, we show that \zcref{item1-pm} implies \zcref{item3-eq}.
Suppose first that $H$ has a perfect matching $M = \{e_1,\ldots, e_p\}$.
Note that $p = |V|/k$, since $H$ is $k$-uniform.

To prove that $\gamma(G) \le |V|/k$, we apply \zcref{lem:edge-cover} and prove instead that $\rho(H)\leq |V|/k$, which clearly follows since $M$ is an edge cover of $H$.
To see that $\gamma(G) \ge |V|/k$, note that for every vertex $v\in V$, the neighborhood of $v$ in $G$ is a nonempty clique in $G$ (this is because $H$ is strongly compatible with $G$).
Hence, if $D$ is a dominating set in $G$ such that $v\in D\cap V$, then we can choose any neighbor $e$ of $v$ in $G$ and replace $v$ with $e$ in $D$ to obtain a dominating set $D'$ in $G$ such that $|D'|\le |D|$.
It follows that any minimum dominating set $D$ in $G$ that minimizes $|D\cap V|$ satisfies $D\subseteq E$. Let $D$ be such a minimum dominating set in $G$. Since each vertex $e$ in $G$ has exactly $k$ neighbors in $V$, we infer that  $|D|\geq |V|/k$, and the equality follows. 

Trivially, \zcref{item3-eq} implies \zcref{item2-ub}.

Finally, we show that \zcref{item2-ub} implies \zcref{item1-pm}.
Suppose that $\gamma(G) \le |V|/k$. 
From \zcref{lem:edge-cover} it follows that $\rho(H)\leq |V|/k$. Let $E'$ be an edge cover of size $\rho(H)$. 
Suppose $E'$ is not a perfect matching.
Then $E'$ contains two edges with nonempty intersection. 
Since $H$ is $k$-uniform, a perfect matching has size $|V|/k$. 
Thus, $|E'|>|V|/k$, since otherwise, there would be vertices of $H$ not covered by $E'$, contradicting that $E'$ is an edge cover. 
Hence, $\rho(H) = |E'|>|V|/k$, a contradiction.
Therefore $E'$ is a perfect matching, as claimed. 
\end{proof}
\zcref{lem:gamma_RkG,lem:perfect-matching} have the following consequence, generalizing \zcref{middle-Italian}.

\begin{theorem}\label{thm:k-Roman}
Let $k\ge 1$, let $H=(V,E)$ be a $k$-uniform hypergraph without isolated vertices, and let $G$ be a graph strongly compatible with $H$.
Then $H$ has a perfect matching if and only if $G$ is a $\{k\}$-Roman graph.
\end{theorem}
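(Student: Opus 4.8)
The plan is to combine Lemma~\ref{lem:gamma_RkG} and Lemma~\ref{lem:perfect-matching} directly, using the elementary observation that a graph strongly compatible with $H$ is in particular compatible with $H$ (by Definition~\ref{def-compatible-strongly-compatible}), so that both lemmas apply to the very same graph $G$. No new construction or estimate is needed; all the substance has already been established.

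First I would recall that, by definition, $G$ is a $\{k\}$-Roman graph if and only if $\gamma_{\{Rk\}}(G) = k\gamma(G)$. Since $H$ is $k$-uniform and $G$ is compatible with $H$, Lemma~\ref{lem:gamma_RkG} gives $\gamma_{\{Rk\}}(G) = |V|$. Substituting, $G$ is a $\{k\}$-Roman graph if and only if $|V| = k\gamma(G)$, i.e.\ if and only if $\gamma(G) = |V|/k$ (in particular this forces $k$ to divide $|V|$). Then I would invoke Lemma~\ref{lem:perfect-matching}: since $H$ is $k$-uniform without isolated vertices and $G$ is strongly compatible with $H$, the conditions $\gamma(G) = |V|/k$ (item~\ref{item3-eq}) and ``$H$ has a perfect matching'' (item~\ref{item1-pm}) are equivalent. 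Chaining the two equivalences yields exactly the statement of the theorem.

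The only point deserving a word of care is the case where $k$ does not divide $|V|$: then $\gamma(G) = |V|/k$ is impossible over the integers, so $G$ is not a $\{k\}$-Roman graph; and, on the other side, a perfect matching of a $k$-uniform hypergraph has exactly $|V|/k$ edges, so $H$ cannot have one either. Thus both sides of the asserted equivalence fail simultaneously, and the claim still holds. I do not expect any genuine obstacle here — the heavy lifting was done in Lemmas~\ref{lem:gamma_RkG} and~\ref{lem:perfect-matching}, and the present proof is essentially bookkeeping that matches the definition of a $\{k\}$-Roman graph against those two results.
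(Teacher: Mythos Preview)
Your proof is correct and is exactly the paper's approach: the theorem is stated there simply as a consequence of Lemmas~\ref{lem:gamma_RkG} and~\ref{lem:perfect-matching}, with no further argument given. Your write-up just makes the chaining of the two equivalences explicit (and the extra remark on $k \nmid |V|$ is harmless but unnecessary, since Lemma~\ref{lem:perfect-matching} already handles that case via the equivalence of items~\ref{item1-pm} and~\ref{item3-eq}).
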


For all $k\ge 2$, the problem of determining if a given $k$-uniform hypergraph has a perfect matching is equivalent to the decision problem \textsc{Exact Cover by $k$-Sets}, which is known to be \textsf{NP}-complete for all $k \geq  3$, even when restricted to $k$-partite hypergraphs~\cite{GAREY}.
Hence, \zcref{thm:k-Roman} yiels the following alternative proof of   \zcref{thm:k-Roman-NP-hard-via-3SAT} when $k\geq 3$.

\begin{theorem}\label{thm:k-Roman-NP-hard}
For every fixed $k\ge 3$, the problem of recognizing $\{k\}$-Roman graphs is \textsf{NP}-hard, even when restricted to the class of split graphs.
\end{theorem}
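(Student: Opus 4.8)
The plan is to give a polynomial-time many-one reduction from the decision problem \textsc{Exact Cover by $k$-Sets}, which by~\cite{GAREY} is \textsf{NP}-complete for every fixed $k\ge 3$ (even on $k$-partite hypergraphs) and which, as already observed just before the statement, is precisely the problem of deciding whether a given $k$-uniform hypergraph has a perfect matching. So let $H=(V,E)$ be a $k$-uniform hypergraph. First I would dispose of the trivial case in which $H$ has an isolated vertex: then $H$ has no perfect matching, so the algorithm simply outputs some fixed split graph that is not $\{k\}$-Roman (for instance any split graph witnessing the strictness of the bound $\gamma_{\{Rk\}}\le k\gamma$). From now on we may assume $H$ has no isolated vertices, which is exactly the hypothesis needed to invoke the earlier lemmas.

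From such an $H$ I would build the split graph $G$ with vertex set $V\cup E$ in which $V$ is an independent set, $E$ is a clique, and a vertex $v\in V$ is adjacent to a hyperedge $e\in E$ precisely when $v\in e$ — that is, the split graph obtained from the bipartite incidence graph of $H$ by turning $E$ into a clique, as used by Bertossi~\cite{Bertossi1984}. This is clearly computable in time polynomial in the size of $H$, and $(E,V)$ is a split partition of $G$, so $G$ is indeed a split graph. Moreover $G$ is strongly compatible with $H$ in the sense of \Cref{def-compatible-strongly-compatible}: by construction $V$ is independent and the $V$–$E$ edges are exactly the incidence pairs, while any two hyperedges are adjacent in $G$ since they all lie in the clique $E$, so in particular any two intersecting hyperedges are adjacent.

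With these observations in place, \Cref{thm:k-Roman} applies directly to the pair $(H,G)$ and yields: $G$ is a $\{k\}$-Roman graph if and only if $H$ has a perfect matching. Since $G$ is a split graph and the reduction runs in polynomial time, this shows that recognizing $\{k\}$-Roman graphs is \textsf{NP}-hard already when restricted to the class of split graphs, for every fixed $k\ge 3$, as claimed. (Equivalently, one may argue without quoting \Cref{thm:k-Roman}: for this $G$ we have $\gamma_{\{Rk\}}(G)=|V|$ by \Cref{lem:gamma_RkG} and $\gamma(G)=\rho(H)$ by \Cref{lem:edge-cover}, so $G$ is $\{k\}$-Roman iff $\rho(H)=|V|/k$ iff $H$ has a perfect matching by \Cref{lem:perfect-matching}.)

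I do not expect a genuine obstacle in carrying this out: all the real work has been pushed into \Cref{thm:k-Roman} and into the known hardness of \textsc{Exact Cover by $k$-Sets}. The only points requiring (minor) care are ensuring the reduced hypergraph has no isolated vertices so that the hypotheses of \Cref{thm:k-Roman} are met, and checking that the constructed $G$ is simultaneously a split graph and strongly compatible with $H$; both are immediate from the construction. It is also worth noting that the argument does not extend to $k=2$, since \textsc{Exact Cover by $2$-Sets} (i.e., perfect matching in graphs) is polynomial-time solvable, which is consistent with the fact that the case $k=2$ is left open.
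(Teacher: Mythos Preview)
Your proposal is correct and follows essentially the same approach as the paper: reduce from \textsc{Exact Cover by $k$-Sets} by constructing the split graph obtained from the bipartite incidence graph of $H$ by turning $E$ into a clique, verify that this graph is strongly compatible with $H$, and invoke \Cref{thm:k-Roman}. You are in fact slightly more careful than the paper, which does not explicitly treat the (trivially handled) case where $H$ has isolated vertices before applying \Cref{thm:k-Roman}.
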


\begin{proof}
Let $H=(V,E)$ be an input to the \textsc{Exact Cover by $k$-Sets} problem, that is, a $k$-uniform $k$-partite hypergraph.
Let $G$ be the graph strongly compatible with $H$ such that $E$ forms a clique.
Since $V$ is an independent set and $E$ a clique in $G$, the graph $G$ is a split graph.
By \zcref{thm:k-Roman}, $H$ has a perfect matching if and only if $G$ is a $\{k\}$-Roman graph.
Hence, determining if $G$ is a $\{k\}$-Roman graph is \textsf{NP}-hard.
\end{proof}

\section{Characterizing \kr suns and their complements}
\label{sec:italian}

In this last section we characterize the \kr   property within two specific subclasses of split graphs: suns and their complements.
For an integer $t\ge 3$, the \emph{$t$-sun} is a split graph on $2t$ vertices with a split partition $(K,I)$ where $K = \{u_1, \ldots, u_t\}$ is a clique and $I = \{w_1,\ldots ,w_t\}$ is an independent set such that $u_i$ is adjacent to $w_j$ if and only if $i=j$ or $i=j+1$ (mod $t$).

    Note that the $t$-sun has a unique split partition.
    We denote the $t$-sun by~$S_{t}$ and its complement\footnote{The \emph{complement} of a graph $G = (V,E)$ is the graph $\overline{G}$ with vertex set $V$ in which two distinct vertices are adjacent if and only if they are not adjacent in $G$.} by $\overline{S_{t}}$.

    See \zcref{fig:t-suns} for examples of $t$-suns.

    \begin{figure}[htbp]
        \centering
        \includegraphics[width=0.5\textwidth]{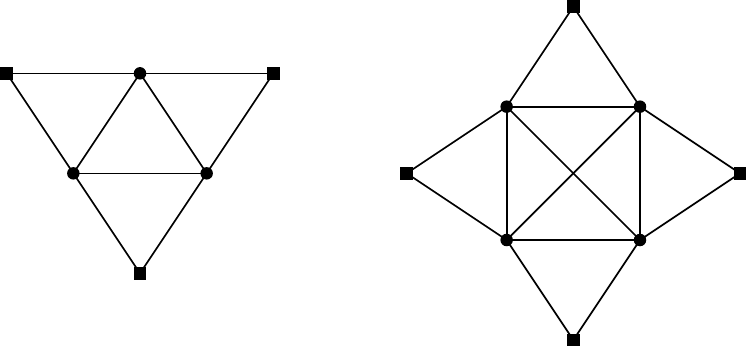}
        \caption{Examples of the $3$-sun (left) and the $4$-sun (right).}
        \label{fig:t-suns}
    \end{figure}

Some of the proofs will make use of the following general observation relating the classes of $\{k\}$-Roman graphs for different values of $k$.

\begin{observation}\label{obs:decreasing-k}
For $k\geq 2$, if $G$ is a $\{k+1\}$-Roman graph, then $G$ is a $\{k\}$-Roman graph.
\end{observation}

\begin{proof}
 Suppose that $G$ is not a $\{k\}$-Roman graph and let $f$ be a $\gamma_{\{R(k)\}}$-function of $G$.
 Then, $w(f)<k\gamma(G)$.
 Let $D$ be a minimum dominating set in $G$ and consider the function $f'\colon V(G)\to \{0,1,\ldots,k+1\}$ defined as \[f'(v)=\left\lbrace\begin{array}{ll}
     f(v)+1 & \text{ if } v\in D  \textnormal{ and}\\
     f(v) & \text{ otherwise}.\\
 \end{array}\right.\]
 Then, $f'$ is a $\{k+1\}$-Roman function of $G$ and $$w(f')=w(f)+|D|=w(f)+\gamma(G)<k\gamma(G)+\gamma(G)=(k+1)\gamma(G).$$ Therefore $G$ is not a $\{k+1\}$-Roman graph.
\end{proof}

\zcref{obs:decreasing-k} leads to the question whether there is a graph that is {$\{k\}$-Roman} for all $k\ge 2$.
However, this is not the case, since if a graph $G$ is $\{k\}$-Roman, then $k\le k \gamma(G)= \gamma_{\{Rk\}}(G)\le |V(G)|$, where the last inequality follows from the observation that assigning~$1$ to each vertex of $G$ yields a $\{k\}$-Roman dominating function of $G$.
On the other hand, it can be easily verified that the $n$-vertex complete graph is $\{k\}$-Roman for all $2\le k\le n$. 

\medskip
We next relate suns with cycles via one of the key notions of the previous section, that of strong compatibility (cf.~\zcref{def-compatible-strongly-compatible}).

\begin{observation}\label{obs:cycles-and-suns}
  The cycle graph $C_t$ is a $2$-uniform hypergraph such that $S_{t}$ is strongly compatible with $C_t$.   
\end{observation}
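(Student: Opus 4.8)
The plan is to unpack both sides of the claimed ``strong compatibility'' directly from \Cref{def-compatible-strongly-compatible}, treating the cycle $C_t$ as a hypergraph whose vertex set is $\{w_1,\dots,w_t\}$ and whose edge set consists of the $t$ pairs $\{w_i,w_{i+1}\}$ (indices mod $t$), and then exhibiting an explicit identification of this edge set with the clique side of $S_t$. Concretely, I would identify the hyperedge $\{w_j, w_{j+1}\}$ of $C_t$ with the clique vertex $u_{j+1}$ of $S_t$; this is a bijection between the $t$ edges of $C_t$ and the $t$ vertices of $K$. With this identification in place, the vertex set of $S_t$ becomes $I \cup E(C_t)$, matching the required form $V \cup E$ from the definition, where here $V = I$ plays the role of the hypergraph's vertex set and $E = E(C_t)$ its edge set.

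Next I would verify the three conditions of strong compatibility one by one. First, $I$ is an independent set in $S_t$ by definition of the $t$-sun, so the ``$V$ is independent'' condition holds. Second, I need that for $w_j \in I$ and a hyperedge $e = \{w_i, w_{i+1}\}$, the pair $\{w_j, e\}$ is an edge of $S_t$ if and only if $w_j \in e$. Under the identification $e \leftrightarrow u_{i+1}$, the vertex $u_{i+1}$ is adjacent in $S_t$ to exactly $w_{i+1}$ (the case $i+1 = j$) and $w_i$ (the case $i+1 = j+1$, i.e.\ $j = i$), using the adjacency rule ``$u_a \sim w_b$ iff $a = b$ or $a = b+1$''. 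So $u_{i+1}$ is adjacent precisely to the two vertices of $e = \{w_i, w_{i+1}\}$, which is exactly the incidence condition. Third, for strong compatibility I need that any two hyperedges with nonempty intersection are adjacent in $S_t$; but $K = \{u_1,\dots,u_t\}$ is a clique, so \emph{all} pairs of vertices of $E$ are adjacent in $S_t$, and in particular the intersecting ones are. (The final sentence of the definition explicitly permits extra edges among the vertices of $E$, which is what is happening here since $C_t$ has non-intersecting edge pairs that nonetheless become adjacent.)

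Finally I would note the two trivial bookkeeping points: $C_t$ is $2$-uniform since each of its edges is a pair, as asserted in the statement; and $C_t$ has no isolated vertices, which is consistent with $S_t$ having no isolated vertices and is worth mentioning since later results (e.g.\ \Cref{thm:k-Roman}) require this. I do not expect any genuine obstacle here — the whole content is choosing the right bijection $\{w_j,w_{j+1}\} \leftrightarrow u_{j+1}$ and then reading off the adjacencies — so the only thing to be careful about is the index arithmetic mod $t$ (whether to send $\{w_j,w_{j+1}\}$ to $u_j$ or to $u_{j+1}$), which is pinned down by checking against the sun's adjacency convention $u_i \sim w_j \iff i \in \{j, j+1 \bmod t\}$. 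The ``hard part,'' such as it is, is merely presenting this identification cleanly enough that the reader sees the two neighbors of each clique vertex really do form a cycle edge.
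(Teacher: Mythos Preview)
Your verification is correct: the bijection $\{w_j,w_{j+1}\}\leftrightarrow u_{j+1}$ makes $V(S_t)=I\cup E(C_t)$, the independence of $I$, the incidence condition, and the clique property of $K$ give exactly the requirements of \Cref{def-compatible-strongly-compatible}. The paper itself provides no proof for this observation, treating it as immediate from the definitions; your write-up simply fills in the routine check the paper leaves implicit.
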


We now have everything ready to characterize the \kr property of suns: it turns out that for all $k\ge 2$ and $t\ge 3$, the $t$-sun $S_{t}$ is $\{k\}$-Roman if and only if $k = 2$ and $t$ is even.

\begin{theorem}
\label{lem:tsun}
For every integer $t\ge 3$, the following holds.
\begin{enumerate}
    \item The graph $S_{t}$ is a \rg{} if and only if $t$ is even. 
  
\item The graph $S_{t}$ is not a $\{k\}$-Roman graph for any $k\ge 3$.  
\end{enumerate}
\end{theorem}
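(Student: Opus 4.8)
The plan is to derive the first statement for free from the hypergraph machinery of \Cref{sec:recognition}, and to establish the second by combining \Cref{obs:decreasing-k} with one explicit, cheap $\{3\}$-Roman dominating function.

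For part~1 I would simply invoke \Cref{obs:cycles-and-suns}: the cycle $C_t$ is a $2$-uniform hypergraph with no isolated vertices and $S_{t}$ is strongly compatible with it. Then \Cref{thm:k-Roman}, applied with $k=2$, says that $S_{t}$ is a $\{2\}$-Roman graph if and only if $C_t$ admits a perfect matching, which happens precisely when $t$ is even. No further computation is needed.

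For part~2, by \Cref{obs:decreasing-k} it suffices to prove that $S_{t}$ is \emph{not} a $\{3\}$-Roman graph, since then $S_{t}$ fails to be $\{k\}$-Roman for every $k\ge 3$ by iterating that lemma. First I would record that $\gamma(S_{t})\ge \lceil t/2\rceil$: indeed $\gamma(S_{t})=\rho(C_t)$ by \Cref{obs:cycles-and-suns} and \Cref{lem:edge-cover}, and any edge cover of $C_t$ uses at least $\lceil t/2\rceil$ edges because each edge covers only two vertices. Next I would define $f\colon V(S_{t})\to\{0,1,2,3\}$ by $f(u_1)=3$, $f(w_j)=1$ for $2\le j\le t-1$, and $f(x)=0$ for every remaining vertex, and check that $f$ is a $\{3\}$-Roman dominating function: every $u_i$ with $i\neq 1$ has $f(u_i)=0$ but is adjacent to $u_1$, so $f(N(u_i))\ge 3$; the vertex $w_1$ has $f(w_1)=0$ and is adjacent to $u_1$, so $f(N(w_1))\ge 3$, and the same holds for $w_t$ (adjacent to $u_1$); and each remaining $w_j$ has $f(w_j)=1>0$ and imposes no constraint. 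Since $f$ has weight $3+(t-2)=t+1$, we conclude $\gamma_{\{R3\}}(S_{t})\le t+1<\tfrac{3t}{2}\le 3\gamma(S_{t})$ for every $t\ge 3$, so $S_{t}$ is not a $\{3\}$-Roman graph, and part~2 follows.

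I do not anticipate a genuine difficulty. The only step requiring a moment of thought is the choice of $f$: concentrating weight $3$ on a single clique vertex $u_1$ simultaneously makes the $\{3\}$-Roman condition automatic for all clique vertices and certifies the two independent vertices $w_1,w_t$ adjacent to $u_1$, so the remaining independent vertices need only the minimal weight $1$. A construction spreading the weight more evenly would be too heavy; for instance, a function supported entirely on the clique already forces weight at least $3t/2$, since summing the condition $f(u_j)+f(u_{j+1})\ge 3$ over all $j$ gives $2f(K)\ge 3t$.
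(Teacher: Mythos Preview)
Your proof is correct and follows essentially the same strategy as the paper: part~1 via \Cref{obs:cycles-and-suns} and \Cref{thm:k-Roman}, and part~2 via \Cref{obs:decreasing-k} together with an explicit $\{3\}$-Roman dominating function of weight $t+1$ compared against $\gamma(S_t)=\rho(C_t)\ge t/2$. The only cosmetic differences are that the paper first disposes of odd $t$ using part~1 before exhibiting its function for even $t$, whereas you treat all $t\ge 3$ uniformly, and that the paper's weight-$(t+1)$ function puts weight $1$ on every vertex of $I$ and on one clique vertex, while yours concentrates weight $3$ on $u_1$ and weight $1$ on $w_2,\dots,w_{t-1}$; both constructions are equally valid.
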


\begin{proof}
 Since the cycle graph $C_t$ has a perfect matching if and only if $t$ is even, the first item follows directly from \zcref{obs:cycles-and-suns} and \zcref{thm:k-Roman}.
 Let now $k\ge 3$.
         Since by \zcref{obs:decreasing-k}, every $\{k\}$-Roman graph is a $\{k-1\}$-Roman graph, no $t$-sun with $t$ odd is a $\{k\}$-Roman graph, since it is not $\{2\}$-Roman, by the first item.
         Assume now that $t\ge 4$ is even, let $(K,I)$ be the split partition of $\sun{t}$, let $u$ be any vertex in $K$ and consider the function $f\colon V(\sun{t})\to \{0,1,2\}$ such that $f(v)=1$ if $v\in I\cup\{u\}$ and $f(v)=0$ otherwise. Because every vertex in $K$ has two neighbors in $I$ and either is $u$ or is adjacent to $u$, $f$ is a $\{3\}$-Roman dominating function with weight $f(\sun{t})=t+1$. Thus, $\gamma_{\{R3\}}(\sun{t})\le t+1$. 
         By \zcref{obs:cycles-and-suns} and \zcref{lem:edge-cover}, we obtain $\gamma(\sun{t})=\rho(C_t)=\lceil\frac{t}{2}\rceil$.
         It follows that $\gamma_{\{R3\}}(\sun{t})\neq 3\gamma(\sun{t})$, since $t+1<3\frac{t}{2} = 3\gamma(\sun{t})$.
         Therefore, since for even $t$, no $t$-sun is a $\{3\}$-Roman graph, no $t$-sun is a $\{k\}$-Roman graph for $k\ge 3$, by \zcref{obs:decreasing-k}.
     \end{proof}

Now let us turn our attention to complements of $t$-suns.

\begin{observation}\label{obs:cosun}
Let $t\ge 3$ and let $(K,I)$ be the split partition of~$S_{t}$.
Then, its complement $\overline{S_{t}}$ is a graph strongly compatible with a $(t-2)$-uniform hypergraph with vertex set~$K$.   
\end{observation}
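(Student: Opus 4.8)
The plan is to exhibit the $(t-2)$-uniform hypergraph explicitly and then verify the two defining conditions of strong compatibility from \Cref{def-compatible-strongly-compatible}. Let $(K,I)$ be the split partition of $\sun{t}$, with $K = \{u_1,\ldots,u_t\}$ and $I = \{w_1,\ldots,w_t\}$, where $u_i$ is adjacent in $\sun{t}$ to $w_j$ if and only if $j \in \{i, i-1\}$ (indices mod $t$). Passing to the complement $\cosun{t}$: the set $I$ becomes a clique, the set $K$ becomes an independent set, and for $u_i \in K$, $w_j \in I$ we have $u_i w_j \in E(\cosun{t})$ if and only if $j \notin \{i, i-1\}$, i.e.\ $u_i$ is non-adjacent in $\cosun{t}$ to exactly the two vertices $w_i$ and $w_{i-1}$, hence adjacent to the other $t-2$ vertices of $I$. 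The natural candidate is therefore the hypergraph $H = (K, E)$ whose vertex set is $K$ and whose hyperedge set is $E = \{\, e_j : j \in \{1,\ldots,t\}\,\}$ with $e_j := I \setminus \{w_j, w_{j-1}\}$ reinterpreted as a subset of $K$ via the bijection $w_i \mapsto u_{i+1}$ (or some convenient reindexing); concretely, $e_j = N_{\cosun{t}}(w_j) \cap K$, which has size $t-2$. So $H$ is $(t-2)$-uniform, and its vertex set is exactly $K$ as required.

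Then I would check that $\cosun{t}$ is compatible with $H$ in the sense of \Cref{def-compatible-strongly-compatible}, identifying the "vertex part" with $K$ and the "hyperedge part" with $I$ (the vertices $w_1,\ldots,w_t$ play the role of the hyperedges $e_1,\ldots,e_t$). First, $K$ must be an independent set in $\cosun{t}$: this holds since $K$ was a clique in $\sun{t}$. Second, for every $u \in K$ and every $e \in E$ (i.e.\ every $w \in I$), the pair $\{u, w\}$ must be an edge of $\cosun{t}$ if and only if $u \in e$: by construction $e_j = N_{\cosun{t}}(w_j)\cap K$, so $u_i \in e_j \iff u_i w_j \in E(\cosun{t})$, which is exactly what is needed. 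This establishes compatibility. For strong compatibility we must additionally check that any two hyperedges $e_{j_1}, e_{j_2}$ with $e_{j_1} \cap e_{j_2} \neq \emptyset$ are adjacent in $\cosun{t}$, i.e.\ that the corresponding vertices $w_{j_1}, w_{j_2}$ are adjacent in $\cosun{t}$. But $I$ is a clique in $\cosun{t}$ (it was independent in $\sun{t}$), so \emph{all} pairs of vertices of $I$ are adjacent, and this condition holds vacuously-strongly. (It is also fine that there may be extra edges among the $w_j$'s and between $K$-vertices, as the definition explicitly permits; indeed all the clique edges of $I$ are such extra edges.)

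There is essentially no hard step here; the only thing requiring care is the bookkeeping of the index shift when identifying $I$ (the hyperedges) with a subset-system on $K$ (the vertices), making sure that $e_j$ really equals $N_{\cosun{t}}(w_j)\cap K$ as a subset of $K$ and that this set has cardinality precisely $t-2$ — the latter because each $w_j$ is non-adjacent in $\sun{t}$ to exactly two $u_i$'s, hence adjacent in $\cosun{t}$ to exactly $t-2$ of them. One should also note in passing that $H$ has no isolated vertices: each $u_i \in K$ lies in $t-2 \ge 1$ hyperedges (using $t \ge 3$), so the hypergraph language applies cleanly and later results such as \Cref{thm:k-Roman}, \Cref{lem:edge-cover}, and \Cref{lem:perfect-matching} can be invoked on $\cosun{t}$ exactly as they were for $\sun{t}$ via \Cref{obs:cycles-and-suns}. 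I would close by remarking that this parallels \Cref{obs:cycles-and-suns} and sets up the analysis of when $\cosun{t}$ is $\{k\}$-Roman, namely in terms of perfect matchings and edge covers of the $(t-2)$-uniform hypergraph $H$.
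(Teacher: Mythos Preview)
Your proposal is correct and follows exactly the natural verification the paper has in mind; in fact the paper states \Cref{obs:cosun} as a bare observation with no proof, so you have simply written out the obvious argument (define $e_j := N_{\cosun{t}}(w_j)\cap K = K\setminus\{u_j,u_{j+1}\}$, observe $|e_j|=t-2$, and check the two conditions of \Cref{def-compatible-strongly-compatible} using that $K$ is independent and $I$ is a clique in $\cosun{t}$). Two minor clean-ups: the detour through ``$I\setminus\{w_j,w_{j-1}\}$ reinterpreted via a bijection'' is unnecessary and slightly muddled---just take $e_j = N_{\cosun{t}}(w_j)\cap K$ from the start, which is already a subset of $K$; and near the end you write ``each $w_j$ is non-adjacent in $\sun{t}$ to exactly two $u_i$'s'', where you mean \emph{adjacent} in $\sun{t}$ (equivalently, non-adjacent in $\cosun{t}$).
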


The $(t-2)$-uniform hypergraph from \zcref{obs:cosun} has $t$ vertices and, hence, can only have a perfect matching if $t$ is a multiple of $t-2$.
Since this is not the case for any $t\ge 5$, \zcref{obs:cosun} and \zcref{thm:k-Roman} imply that for all $t\ge 5$, the graph $\overline{S_{t}}$ is not $\{t-2\}$-Roman (and, hence, also not $\{k\}$-Roman for any $k\ge t-2$, by \zcref{obs:decreasing-k}).
In the next theorem, we strengthen this observation by showing that, for integers $k\ge 2$ and $t\ge 3$, the graph $\overline{S_{t}}$ is $\{k\}$-Roman if and only if $k = 2$ and $t\in \{4,5\}$. 

    \begin{theorem}\label{th:comp-tsun}
    For every integer $t\ge 3$, the following holds.
\begin{enumerate}
    \item The graph $\overline{S_{t}}$ is a \rg{} if and only if $t\in\{4,5\}$.
\item The graph $\overline{S_{t}}$ is not a $\{k\}$-Roman graph for any $k\ge 3$.  
\end{enumerate}
\end{theorem}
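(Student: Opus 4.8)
The plan is to handle the two items together by first computing the relevant domination-type parameters of $\overline{S_t}$ and then checking the defining equality $\gr(\overline{S_t}) = 2\gamma(\overline{S_t})$ (and its analogue for general $k$). For the small cases $t=3,4,5$ I would argue directly; for $t\ge 6$ I would combine \Cref{obs:cosun}, \Cref{thm:k-Roman}, \Cref{lem:edge-cover}, and \Cref{obs:decreasing-k}. Let me set up notation: write $(K,I)$ for the split partition of $S_t$ with $K=\{u_1,\dots,u_t\}$, $I=\{w_1,\dots,w_t\}$, and $u_i\sim w_j$ in $S_t$ iff $j\in\{i,i-1\}$ (mod $t$). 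In $\overline{S_t}$ the set $I$ becomes a clique and $K$ an independent set, so $\overline{S_t}$ is itself a split graph with split partition $(I,K)$; each $u_i\in K$ is adjacent in $\overline{S_t}$ to exactly the $t-2$ vertices of $I$ other than $w_i,w_{i-1}$. This is the strongly compatible hypergraph of \Cref{obs:cosun}, a $(t-2)$-uniform hypergraph on vertex set $K$ whose hyperedges are indexed by $I$.

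\smallskip
\noindent\emph{Step 1: the parameters.} First I would observe $t=3$ gives $\overline{S_3}\cong S_3$ (the $3$-sun is self-complementary), which is not $\{2\}$-Roman since $3$ is odd by \Cref{lem:tsun}; this also kills $k\ge 3$ there. For $t\ge 4$ I compute $\gamma(\overline{S_t})$. Using \Cref{lem:edge-cover} with the $(t-2)$-uniform hypergraph $H$ of \Cref{obs:cosun}, $\gamma(\overline{S_t})=\rho(H)$. Two hyperedges $w_i,w_j$ (as subsets of $K$) together cover $K$ precisely when the two excluded pairs $\{u_i,u_{i-1}\}$ and $\{u_j,u_{j-1}\}$ are disjoint and jointly have size $\le$ the deficiency, i.e.\ when $t-4\le 2$; so $\rho(H)=2$ exactly when $t\in\{4,5,6\}$, and $\rho(H)\ge 3$ for $t\ge 7$ (in fact $\rho(H)=\lceil t/(t-2)\rceil$ rounded appropriately — but I only need the threshold, and for $t\ge 7$ no two hyperedges cover, and three always do since any three indices leave at most $6-t<0$ uncovered... careful: need to recheck, three hyperedges cover $3(t-2)\ge t$ counted with multiplicity but coverage can fail only if some $u_\ell$ lies in all three excluded pairs, impossible since each $u_\ell$ lies in only two pairs $\{u_\ell,u_{\ell-1}\},\{u_{\ell+1},u_\ell\}$; so $\rho(H)=3$ for all $t\ge 7$). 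Thus $\gamma(\overline{S_t})=2$ for $t\in\{4,5,6\}$ and $=3$ for $t\ge 7$. Next, $\gr(\overline{S_t})=?$ — I would not need an exact value; I need only to compare it with $2\gamma$.

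\smallskip
\noindent\emph{Step 2: the $\{2\}$-Roman equality.} For $t\in\{4,5\}$: $H$ has $t$ vertices and is $(t-2)$-uniform, so a perfect matching would need $t$ divisible by $t-2$, which holds for $t=4$ ($4/2=2$) but \emph{not} for $t=5$. So I cannot use \Cref{thm:k-Roman} directly to get that $\overline{S_5}$ is $\{2\}$-Roman — the equivalence there is about $\{k\}$-Roman with $k$ the uniformity. Instead I must directly exhibit a $\{2\}$-Roman dominating function of $\overline{S_4}$ and of $\overline{S_5}$ of weight $2\gamma=4$ with no lighter one, and for $t=6$ show $\gr(\overline{S_6})<4$. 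For $\overline{S_4}$: $H$ is $2$-uniform on $4$ vertices = a $4$-cycle (one checks the excluded pairs form a $4$-cycle on $K$), which has a perfect matching, so by \Cref{thm:k-Roman} $\overline{S_4}$ is $\{2\}$-Roman. For $\overline{S_5}$: compute $\gr$ directly. Lower bound $\gr(\overline{S_5})\ge 2\gamma=4$ would need a separate argument — here \Cref{lem:weighttoclique} applied to the split partition $(I,K)$ helps: there is a $\gr$-function $f$ with $f(N(u)\cap K)\le 1$ for all $u\in I$ and $f(I)\ge 2$; analyzing how little weight can go on $K$ and on $I$ given that each $u_i\in K$ with $f(u_i)=0$ needs $f(N(u_i)\cap I)\ge 2$ and $N(u_i)\cap I$ omits only $w_i,w_{i-1}$ should force weight $\ge 4$, matched by assigning $2$ to one vertex of $I$ and $1$ to one other vertex of $I$ (check: a single $w_i=2$ dominates all of $K$ except... no, $w_i$ is non-adjacent to $u_i,u_{i+1}$ in $\overline{S_t}$, so two of $K$ remain with neighbourhood-weight $0$ from that; add $w_{i+3}=2$? in $\overline{S_5}$ the two leftover $u$'s get covered — needs a concrete check). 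For $\overline{S_6}$: exhibit $f$ of weight $3<2\cdot 2=4$, e.g.\ put $1$ on a cleverly chosen triple; since $\gamma=2$, weight $3$ suffices to break the equality, so $\overline{S_6}$ is \emph{not} $\{2\}$-Roman. For $t\ge 7$: $\gamma=3$, and assigning $1$ to all $t$ vertices of $K$... no — rather assign $1$ to each vertex of $I$: that is weight $t$; better, assign $2$ to three vertices of $I$ chosen so their excluded-pair sets are disjoint (possible iff $t\ge 6$), giving a $\{2\}$-Roman dominating function of weight $6$; we need $6<2\gamma=6$? that's not strict. So instead I use that $\rho(H)$ hyperedges of weight... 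I would assign value $2$ to $\gamma-1$ vertices of $I$ and $1$ to enough extra to patch the at most two uncovered $u$'s, total $2(\gamma-1)+1 < 2\gamma$; the patching works because $\gamma-1$ generic hyperedges leave few vertices uncovered. This is the one genuinely fiddly spot.

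\smallskip
\noindent\emph{Step 3: $k\ge 3$.} By \Cref{obs:decreasing-k} it suffices to show $\overline{S_t}$ is not $\{3\}$-Roman for every $t\ge 3$. For $t=3$ done (not even $\{2\}$-Roman). For $t\ge 4$, $\gamma(\overline{S_t})\in\{2,3\}$ as computed, and I would exhibit a $\{3\}$-Roman dominating function of weight strictly below $3\gamma$: e.g.\ put $1$ on every vertex of $I$ and $1$ on one vertex of $K$ — total $t+1$ — and check each $u_i\in K$ with value $0$ has $f(N(u_i))=$ (the $t-2$ neighbours in $I$, each value $1$) $=t-2\ge 3$ once $t\ge 5$; for $t=4$ we need a different witness. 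When $\gamma=2$ (i.e.\ $t\le 6$) we need weight $<6$; when $\gamma=3$ ($t\ge 7$) we need weight $<9$, and $t+1<9\iff t\le 7$, so for $t\ge 8$ I instead use weight $\approx$ something bounded — actually for large $t$, $\gamma=3$ and the all-ones-on-$I$ function alone has weight $t$, too big, so again I must be smarter: assign $2$ to two vertices of $I$ with disjoint excluded pairs plus $1$ to patch; total $\le 5+c<9$. The recurring obstacle, and the part I expect to cost the most care, is getting \emph{matching} constructions and lower bounds for the handful of small $t$ where $\gamma$ is small and the slack between $\gr$ and $2\gamma$ (resp.\ $\gamma_{\{R3\}}$ and $3\gamma$) is only one unit — especially the tight case $t=5$ in item~1, which \Cref{thm:k-Roman} does \emph{not} cover because $t-2=3\nmid 5$, forcing a hands-on computation of $\gr(\overline{S_5})$ via \Cref{lem:weighttoclique}.
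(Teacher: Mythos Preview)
There is a genuine computational error in Step~1 that propagates through the rest of the plan. You claim that for $t\ge 7$ no two hyperedges of $H$ cover $K$, and hence $\gamma(\overline{S_t})=\rho(H)=3$. This is false. The hyperedge indexed by $w_i$ is $K\setminus\{u_i,u_{i+1}\}$, so two hyperedges $e_i,e_j$ cover $K$ if and only if the two excluded pairs $\{u_i,u_{i+1}\}$ and $\{u_j,u_{j+1}\}$ are disjoint; there is no additional ``size $\le$ deficiency'' condition. For every $t\ge 4$ one can take, say, $i=1$ and $j=3$, obtaining disjoint excluded pairs, so $\rho(H)=2$ and $\gamma(\overline{S_t})=2$ for \emph{all} $t\ge 4$. (This is exactly \Cref{claim:gamma_comp=2}.) With the correct value $\gamma=2$, the target inequality for $t\ge 6$ becomes $\gr(\overline{S_t})\le 3<4$, and your proposed weight-$6$ and weight-$5$ constructions are far too heavy; the paper instead puts weight~$1$ on three vertices of $I$ whose $S_t$-neighbourhoods are pairwise disjoint (possible once $t\ge 6$), giving weight~$3$. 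Similarly, your Step~3 for large $t$ unravels: once you know $\overline{S_t}$ is not $\{2\}$-Roman for $t\ge 6$, \Cref{obs:decreasing-k} immediately gives item~2 for those $t$, and no separate $\{3\}$-Roman construction is needed.

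A second, smaller error: $\overline{S_3}$ is \emph{not} isomorphic to $S_3$. The degree sequences are $(4,4,4,2,2,2)$ versus $(3,3,3,1,1,1)$; $\overline{S_3}$ is the net (a triangle with a pendant at each vertex), with $\gamma(\overline{S_3})=3$. The paper handles $t=3$ by the direct inequality $\gr(\overline{S_3})\le 4<6=2\gamma(\overline{S_3})$. Your treatment of $t=4$ (via the $4$-cycle hypergraph and \Cref{thm:k-Roman}) and of $t=5$ (hands-on via \Cref{lem:weighttoclique}) is sound and matches the paper's strategy; for $t=5$, $k=3$ the paper gets a clean finish by noting the associated $3$-uniform hypergraph on $5$ vertices cannot have a perfect matching, so \Cref{thm:k-Roman} applies directly.
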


The proof of \zcref{th:comp-tsun} relies on the following lemmas.

\begin{lemma}\label{claim:gamma_comp=2}
        If $t\ge 5$, then $\gamma({\cosun{t}})=2$. 
    \end{lemma}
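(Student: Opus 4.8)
The plan is to compute $\gamma(\cosun{t})$ directly for $t \ge 5$ by first establishing the lower bound $\gamma(\cosun{t}) \ge 2$ and then exhibiting a dominating set of size $2$. For the lower bound, I would observe that $\cosun{t}$ has no universal vertex when $t \ge 5$: in $\sun{t}$, each vertex $u_i \in K$ is non-adjacent to exactly $t-2 \ge 3$ vertices of $I$, and each $w_j \in I$ is non-adjacent to the other $t-1 \ge 4$ vertices of $I$ together with $t-2$ vertices of $K$; complementing, no vertex of $\cosun{t}$ is adjacent to all others, so $\gamma(\cosun{t}) \ge 2$.

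For the upper bound I would exhibit an explicit pair of vertices dominating $\cosun{t}$. Write $(K,I)$ with $K = \{u_1,\dots,u_t\}$, $I = \{w_1,\dots,w_t\}$, and recall $u_i \sim w_j$ in $\sun{t}$ iff $j \in \{i, i-1\}$ (indices mod $t$). I claim $D = \{w_1, w_3\}$ works when $t \ge 5$ (one should double-check small residues, but the idea is that two independent-set vertices at cyclic distance $2$ suffice). In $\cosun{t}$: the vertex $w_1$ is adjacent to every $w_j$ with $j \ne 1$ and to every $u_i$ with $i \notin \{1, 0\} = \{1, t\}$; the vertex $w_3$ is adjacent to every $w_j$ with $j \ne 3$ and to every $u_i$ with $i \notin \{3, 2\}$. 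Together they cover all of $I$ (each $w_j$ is hit by whichever of $w_1, w_3$ it differs from), and they cover all of $K$ provided $\{1,t\} \cap \{2,3\} = \emptyset$, which holds exactly when $t \ge 5$ (for $t = 5$: $\{1,5\}$ and $\{2,3\}$ are disjoint). Finally, $w_1 \sim w_3$ in $\cosun{t}$, so $D$ is dominated as well. Hence $D$ is a dominating set of size $2$ and $\gamma(\cosun{t}) \le 2$, giving equality.

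I do not anticipate a serious obstacle here; the only mild care needed is to pick the dominating pair so that the index arithmetic genuinely works for all $t \ge 5$ (and in particular to verify the boundary case $t = 5$ by hand), and to confirm via \Cref{obs:cosun} or directly that $\cosun{t}$ indeed has the adjacency structure claimed. An alternative, perhaps cleaner, route is to use \Cref{obs:cosun}: $\cosun{t}$ is strongly compatible with a $(t-2)$-uniform hypergraph $H$ on vertex set $K$, so by \Cref{lem:edge-cover} we have $\gamma(\cosun{t}) = \rho(H)$, and one then checks that two hyperedges of $H$ — which correspond to two suitably chosen vertices of $I$ — already cover all $t$ vertices of $K$ once $t \ge 5$, while a single hyperedge has size $t-2 < t$ and cannot. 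Either way the final statement $\gamma(\cosun{t}) = 2$ for $t \ge 5$ follows.
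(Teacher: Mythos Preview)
Your approach matches the paper's: rule out a universal vertex to get $\gamma(\cosun{t}) \ge 2$, then dominate with two vertices of $I$ whose $\sun{t}$-neighborhoods in $K$ are disjoint (the paper phrases this abstractly, you pick $\{w_1,w_3\}$ explicitly). One small slip: under your stated convention $u_i \sim w_j \iff j \in \{i,i-1\}$, the $\sun{t}$-neighbors of $w_1$ in $K$ are $u_1,u_2$ (not $u_1,u_t$) and those of $w_3$ are $u_3,u_4$ (not $u_2,u_3$); either way the two neighborhoods are disjoint for $t\ge 4$, so the pair $\{w_1,w_3\}$ still works and the argument stands.
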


    \begin{proof}
        Let~$(K,I)$ be the split partition of~$\sun{t}$. 
        Since $\sun{t}$ does not have isolated vertices, $\cosun{t}$ does not have universal vertices and so $\gamma(\cosun{t})>1$.         
        Let $v_1$, $v_2$ be two vertices in $I$ that have no neighbors in common in~$\sun{t}$.
        Consider the set of vertices $D=\{v_1,v_2\}$ of $\cosun{t}$. Note that, in $\cosun{t}$, vertex $v_1$ is adjacent to every vertex but its two neighbors in $\sun{t}$.
        Furthermore, since $v_2$ has no neighbors in common with $v_1$ in $\sun{t}$, vertex $v_2$ is adjacent in $\cosun{t}$ to the non-neighbors of~$v_1$. Therefore $D$ is a dominating set in $\cosun{t}$, implying that $\gamma(\cosun{t})=2$.
    \end{proof}

\begin{proof}[Proof of \zcref{th:comp-tsun}.]
       For $t=3$, it is easy to check that $\gr(\cosun{3}) \le 4 < 6 = 2 \gamma(\cosun{3})$.
       Hence, by \zcref{obs:decreasing-k}, $\cosun{3}$ is not $\{k\}$-Roman for any $k\ge 2$.

        For $t=4$, $\cosun{4}$ is isomorphic to $\sun{4}$, and it is clear from \zcref{lem:tsun} that $\cosun{4}$ is a \rg{} and not a $\{k\}$-Roman graph for any $k\ge 3$.

        Suppose that $t=5$. 
        To prove the first part of the statement, observe that by \zcref{claim:gamma_comp=2}, $\gamma(\cosun {5})= 2$, hence, showing that $\cosun {5}$ is a $\{2\}$-Roman graph is equivalent to showing that $\gr(\cosun{5})\ge 4$. 
        To this end, let $(K,I)$ be the split partition of $\cosun{5}$ and let $f$ be a $\gr$-function of $\cosun{5}$. 
        If $f(K)<2$, then $f(x)\geq 1$ for every $x\in I$, since $I$ is an independent set, and consequently $f(I)\geq 5$, which contradicts that~$5 \leq f(K \cup I) = \gr(\cosun{5}) \leq 2 \gamma(\cosun{5}) = 4$. 
        Hence, $f(K)\ge 2$.
        
        Suppose first that there exists $v\in K$ with $f(v) = 2$. Then, there exist exactly two vertices $x,y\in I$ that are not adjacent to $v$ in $\cosun{5}$. Since by the definition of a $\{2\}$-Roman dominating function, we need a weight of at least $2$ on $N[\{x,y\}]$, we conclude that in this case $\gr(\cosun{5})=f(\cosun{5})\ge 4$.
        
        Suppose now that there exists $v_1,v_2\in K$ with $f(v_1)=f(v_2)=1$, and note that $v_1$ and $v_2$ can have either one or two common neighbors in $I$.
        Consider first the case when they have two common neighbors in $I$. 
        Then there exists $x\in I$ that is adjacent to neither $v_1$ nor $v_2$ (see \zcref{fig:co-5-suns}). 

    \begin{figure}[htbp]
       \centering
        \includegraphics[scale=1]{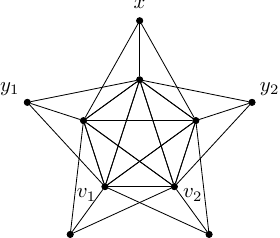}
       \caption{The complement of the $5$-sun with vertices $v_1,v_2\in K$ and $x,y_1,y_2\in I$, where vertex $x$ is not adjacent to $v_1$ nor $v_2$, vertex $y_1$ is adjacent to $v_1$ but not to $v_2$, and vertex $y_2$ is adjacent to $v_2$ but not to $v_1$.}
       \label{fig:co-5-suns}
   \end{figure}
   
        Now either~$f(N[x]) \geq 2$, in which case~$\gr(\cosun{5}) = f(\cosun{5})\ge 4$, or $f(x) = 1$. 
        In that case, since $v_1 \in N[y_1]$ and~$f(v_1) = 1$, we can observe that $f(N[y_1]) \geq 2$ (either $f(y_1) \geq 1$, or another neighbor of~$y_1$ has weight at least~$1$), where $y_1$ is adjacent to $v_1$ and not to $v_2$.
        So in this case~$\gr(\cosun{5}) = f(\cosun{5})\ge 4$ as well. 
    It remains to consider the case when $v_1$ and $v_2$ have one common neighbor in $I$. 
    Denote by $x_i$, $i\in\{1,2,3,4\}$, the remaining vertices in $I$. 
        Now each~$x_i$ either has weight at least~$1$, or there is a vertex distinct from~$v_1$ and~$v_2$ in its neighborhood that has weight at least~$1$. 
        Since the vertices $x_i$ have no common neighbor, we need a weight of at least $2$ on $(\bigcup_{i \in \{1,2,3,4\}} N[x_i]) \setminus \{v_1, v_2\}$, which will again result in $\gr(\cosun{5})=f(\cosun{5})\ge 4$.
        
        For the second part of the statement, by \zcref{obs:cosun} $\cosun{5}$ is strongly compatible with a 3-uniform hypergraph $H$. 
        Then, from \zcref{thm:k-Roman}, $\cosun{5}$ is not a $\{3\}$-Roman graph since the hypergraph $H$ does not have a perfect matching. 
        And again by \zcref{obs:decreasing-k}, $\cosun{5}$ is not a $\{k\}$-Roman graph for any~${k \geq 3}$.

        Finally, consider the case $t\ge 6$. 
        Now, let $(K,I)$ be the split partition of $\sun{t}$.
        Since $t\ge 6$, we can always choose vertices $x,y,z\in I$ such that no two of them have a common neighbor in $\sun{t}$. 
        Let $f\colon V(\cosun{t})\to \{0,1,2\}$ such that $f(x)=f(y)=f(z)=1$ and $f(v)=0$ otherwise. 
        Note that, if $v\in K$, then $v$ is adjacent in $\cosun{t}$ to at least two vertices among $x,y$, and $z$, thus $f(N(v)) \ge 2$. 
        Next, if $v\in I\setminus\{x,y,z\}$, then, since $I$ is a clique in $\cosun{t}$, vertex $v$ will be adjacent in $\cosun{t}$ to the vertices $x,y,z$, i.e., $f(N(v)) = 3$. 
        Therefore, $f$ is a $\{2\}$-Roman dominating function of $\cosun{t}$, implying $\gr(\cosun{t})\le 3$.
        From \zcref{claim:gamma_comp=2} it follows that $\cosun{t}$ is not a \rg{}, since $\gr(\cosun{t})\le 3 < 4= 2\gamma(\cosun{t})$.
        By \zcref{obs:decreasing-k}, $\cosun{t}$ is not a $\{k\}$-Roman graph for any~${k \geq 2}$.
    \end{proof}

\paragraph{Acknowledgements}

This work is supported in part by the Slovenian Research and Innovation Agency (I0-0035, research programs P1-0285, P1–0383, and P1-0404, and research projects J1-60012, J1-70035, J1-70046, and N1-0370), by the Agencia Nacional de Promoción de la Investigación, el Desarrollo Tecnológico y la Innovación through project PICT-2020-SERIE A-03032, and by Universidad Nacional de Rosario through project 80020220700042UR.

\bibliographystyle{abbrv} 
\bibliography{biblio} % Entries are in the "biblio.bib" file

\end{document}